\newtheorem{theorem}{Theorem}
\newtheorem{proposition}{Proposition}
\newtheorem{lemma}{Lemma}
\newtheorem{problem}{Problem}
\def \T{\textup{T}}
\def \diag{\textup{diag}}
\begin{document}
	\title{On a generalization of the Johnson-Newman theorem to multiple rank-one perturbations}
\author{Wei Wang\thanks{Corresponding author. Email address: wangwei.math@gmail.com}\quad \quad Siqi Wu\\
{\footnotesize  School of Mathematics, Physics and Finance, Anhui Polytechnic University, Wuhu 241000, P. R. China
}
}
\date{}

\maketitle
\begin{abstract}
Wang and Zhao (Adv. Appl. Math. 173 (2026) 102994) generalized the classic Johnson-Newman theorem  on simultaneous similarity of symmetric matrices  from a single rank-one perturbation to multiple rank-one perturbations. However, their result applies only to specific rank-one perturbations, and the given condition is quite involved as it relies on multivariate polynomials. We provide a simple proof of their result, leading to an improved version with a simplified condition that holds for arbitrary rank-one perturbations.
	
\end{abstract}
		\noindent\textbf{Keywords:} rank-one perturbation; simultaneous similarity; Weinstein-Aronszajn identity;  Gram matrix; orthogonal matrix; nonnegative matrix.\\
	
	\noindent\textbf{Mathematics Subject Classification:} 05C50

\section{Introduction}

Let  $\mathcal{A}:=(A_0,\ldots,A_{m})$ and $\mathcal{B}:=(B_0,\ldots, B_{m})$ be two $(m+1)$-tuples of real symmetric matrices. We call  $\mathcal{A}$ and $\mathcal{B}$ \emph{entrywise  similar} if for  each  $i\in \{0,1,\ldots,m\}$, the corresponding matrices $A_i$ and $B_i$ are similar, i.e.,  there exists an orthogonal matrix $Q_i$ such that $Q_i^\T A_iQ_i=B_i$.  We call $\mathcal{A}$ and $\mathcal{B}$ \emph{simultaneously similar} if there exists a common orthogonal matrix $Q$ such that $Q^\T A_i Q=B_i$ for all $i\in\{0,1,\ldots,m\}$.  In a classic paper concerning cospectral graphs, Johnson and Newman \cite{johnson1980JCTB} proved the following result on simultaneous similarity for the special case that $\mathcal{A}=(A,A+ee^\T)$ and $\mathcal{B}=(B,B+ee^\T)$, where $e$ is the all-ones vector.  
\begin{theorem}[\cite{johnson1980JCTB}]\label{jper}
Let $A$ and $B$ be two $n\times n$ real symmetric matrices. Then  $(A,A+ee^\T)$ and $(B,B+ee^\T)$ are entrywise similar if and only if they are simultaneously similar via an orthogonal matrix $Q$ such that $Q^\T A Q=B$ and $Q^\T e=e$.
\end{theorem}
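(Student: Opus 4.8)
The ``if'' direction is immediate: if $Q$ is orthogonal with $Q^\T A Q = B$ and $Q^\T e = e$, then $Q^\T(A+ee^\T)Q = B + (Q^\T e)(Q^\T e)^\T = B + ee^\T$, so the two tuples are simultaneously, hence entrywise, similar. The content lies in the ``only if'' direction, and the plan is to reduce it to matching the ``main angles'' of $e$ relative to $A$ and $B$ by means of the Weinstein--Aronszajn identity, and then to synthesize $Q$ eigenspace by eigenspace.

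First I would record what entrywise similarity yields. Since $A\sim B$ and $A+ee^\T\sim B+ee^\T$ with all matrices symmetric, the two pairs share characteristic polynomials, so $\det(\lambda I - A)=\det(\lambda I - B)$ and $\det(\lambda I - A - ee^\T)=\det(\lambda I - B - ee^\T)$. By the Weinstein--Aronszajn (matrix-determinant) identity,
\[
\det(\lambda I - A - ee^\T) = \det(\lambda I - A)\bigl(1 - e^\T(\lambda I - A)^{-1}e\bigr),
\]
and likewise for $B$. Writing $f_M(\lambda):=e^\T(\lambda I - M)^{-1}e$ and using that $\det(\lambda I - A)=\det(\lambda I - B)$ is a nonzero polynomial, the two determinant identities force $f_A = f_B$ as rational functions.

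Next I would extract the spectral consequence. Let $\mu_1,\dots,\mu_r$ be the common distinct eigenvalues, with $V_k^A, V_k^B$ the corresponding $\mu_k$-eigenspaces and $P_k^A, P_k^B$ the orthogonal projections onto them; since $A\sim B$ one has $\dim V_k^A = \dim V_k^B$. Using the spectral resolution $(\lambda I - M)^{-1}=\sum_k (\lambda-\mu_k)^{-1}P_k^M$, the partial-fraction expansions read
\[
f_A(\lambda) = \sum_{k=1}^r \frac{\|P_k^A e\|^2}{\lambda-\mu_k}, \qquad f_B(\lambda) = \sum_{k=1}^r \frac{\|P_k^B e\|^2}{\lambda-\mu_k}.
\]
Comparing residues at each pole, the equality $f_A = f_B$ delivers the crucial relations $\|P_k^A e\|^2 = \|P_k^B e\|^2$ for every $k$.

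Finally I would build $Q$ block by block. On each eigenvalue block I choose an orthogonal isomorphism $Q_k:V_k^B\to V_k^A$ sending $P_k^B e$ to $P_k^A e$; this exists precisely because the two vectors have equal norm and the spaces have equal dimension (when both projected vectors vanish, any orthogonal isomorphism serves). Assembling the $Q_k$ produces an orthogonal $Q$ with $Q V_k^B = V_k^A$, so that for $v\in V_k^B$ we get $A Q v = \mu_k Q v = Q B v$, whence $A Q = Q B$ and therefore $Q^\T A Q = B$. Moreover $Q e = \sum_k Q P_k^B e = \sum_k P_k^A e = e$, which also gives $Q^\T e = e$. The genuinely substantive step is the residue comparison forcing equality of all the main angles; after that the construction is essentially forced, and the only point needing a word of care is the vanishing case, where the matching condition is vacuous and so raises no obstruction.
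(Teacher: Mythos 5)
Your proof is correct, and it follows essentially the same route the paper takes: the paper does not prove Theorem \ref{jper} directly (it is quoted from Johnson--Newman), but its proof of the generalization, Theorem \ref{main1}, uses exactly your ingredients --- the Weinstein--Aronszajn identity and residue comparison to match $\|P_k^\T e\|$ across eigenspaces (Lemma \ref{samelen}), followed by a block-by-block assembly of $Q$ from per-eigenspace orthogonal maps. Your argument is precisely that machinery specialized to $m=1$, where the Gram-matrix matching reduces to the single norm equality you obtain from the residues.
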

Assuming $A$ and $B$ are adjacency matrices of two graphs $G$ and $H$, Wang \cite{wang2006EUJC} observed that if $G$ is controllable then  the corresponding orthogonal matrix $Q$ is unique and rational. Based on this elementary but important observation, Wang and his collaborators have successfully developed the theory of generalized spectral characterizations of graphs, see e.g. \cite{wang2006EUJC,wang2017JCTB,guo2025AAM,qiu2023}

In a recent paper concerning the spectral characterizations of regular graphs, Qiu et al.~\cite{qiu2023} found an extension of Theorem \ref{jper}: the all-ones matrix $ee^\T$ in Theorem \ref{jper} can be replaced by any  symmetric  matrices  of the form $\xi \xi^\T$, where $\xi\in \mathbb{R}^n\setminus \{0\}$. 
\begin{theorem}[\cite{qiu2023}]\label{rk1per}
	Let $A,B$ be two $n\times n$ real symmetric matrices and let $\alpha,\beta$ be two nonzero vectors in $\mathbb{R}^n$. Then  $(A,A+\alpha\alpha^\T)$ and $(B,B+\beta\beta^\T)$ are entrywise similar if and only if  they are  simultaneously similar via an orthogonal matrix $Q$ such that $Q^\T \alpha=\beta$.
\end{theorem}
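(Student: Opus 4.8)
The plan is to prove the nontrivial (``only if'') direction, since the converse is immediate: if a common orthogonal $Q$ satisfies $Q^\T A Q = B$ and $Q^\T\alpha = \beta$, then $Q^\T(A+\alpha\alpha^\T)Q = Q^\T A Q + (Q^\T\alpha)(Q^\T\alpha)^\T = B + \beta\beta^\T$, so the two tuples are entrywise similar. Conversely, assume $A$ is similar to $B$ and $A+\alpha\alpha^\T$ is similar to $B+\beta\beta^\T$; since all four matrices are symmetric, ``similar'' just means ``same eigenvalues with the same multiplicities.'' My goal is to manufacture an orthogonal $Q$ with $Q^\T A Q = B$ and $Q^\T\alpha=\beta$, the latter equality then delivering the second similarity for free.

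First I would extract scalar spectral data via the Weinstein--Aronszajn identity. Writing $\phi_M(x)=\det(xI-M)$, the rank-one determinant formula gives
\[
\phi_{A+\alpha\alpha^\T}(x)=\phi_A(x)\bigl(1-\alpha^\T(xI-A)^{-1}\alpha\bigr),
\]
and similarly for $B,\beta$. The hypotheses say $\phi_A=\phi_B$ and $\phi_{A+\alpha\alpha^\T}=\phi_{B+\beta\beta^\T}$, so after cancelling the common nonzero factor $\phi_A=\phi_B$ I obtain the identity of rational functions
\[
\alpha^\T(xI-A)^{-1}\alpha=\beta^\T(xI-B)^{-1}\beta.
\]

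Next I would localize this identity at the eigenvalues. Let $\lambda_1,\ldots,\lambda_d$ be the common distinct eigenvalues and let $P_i,P_i'$ be the orthogonal eigenprojections of $A$ and $B$ onto the $\lambda_i$-eigenspaces; note $\operatorname{rank}P_i=\operatorname{rank}P_i'$ because $A$ and $B$ are cospectral. Expanding $(xI-A)^{-1}=\sum_i (x-\lambda_i)^{-1}P_i$ (and likewise for $B$) and comparing residues at each $\lambda_i$ converts the rational identity into the scalar equalities
\[
\|P_i\alpha\|^2=\alpha^\T P_i\alpha=\beta^\T P_i'\beta=\|P_i'\beta\|^2,\qquad i=1,\ldots,d.
\]
These norm equalities are the heart of the matter, and I expect establishing them to be the only genuine obstacle: once Weinstein--Aronszajn collapses the spectral hypotheses to the pointwise conditions $\|P_i\alpha\|=\|P_i'\beta\|$, the rest is a routine eigenspace-by-eigenspace gluing.

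Finally I would assemble $Q$ blockwise. For each $i$ the subspaces $\operatorname{range}(P_i')$ and $\operatorname{range}(P_i)$ have equal dimension, and the vectors $P_i'\beta$ and $P_i\alpha$ inside them have equal norm; hence there is a linear isometry $Q_i:\operatorname{range}(P_i')\to\operatorname{range}(P_i)$ with $Q_i(P_i'\beta)=P_i\alpha$ (when $P_i\alpha=0$ then $P_i'\beta=0$ too, and any isometry $Q_i$ will serve). Setting $Qx=\sum_i Q_i(P_i'x)$ then defines an orthogonal matrix that carries the $\lambda_i$-eigenspace of $B$ onto that of $A$, whence $Q^\T A Q=B$, and that satisfies $Q\beta=\sum_i Q_i(P_i'\beta)=\sum_i P_i\alpha=\alpha$, i.e.\ $Q^\T\alpha=\beta$, completing the argument.
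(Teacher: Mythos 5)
Your argument is correct and follows essentially the same route the paper uses for its generalization (Theorem \ref{main1}): the Weinstein--Aronszajn identity plus the spectral decomposition yields the eigenspace-wise norm equalities $\|P_i\alpha\|=\|P_i'\beta\|$ (this is exactly Lemma \ref{samelen}, whose proof you have in effect supplied), and the blockwise isometries $Q_i$ you glue together are the $m=1$ instance of the paper's Gram-matrix construction of $Q$. No gaps; the residue comparison and the verification $QB=AQ$ both go through as you describe.
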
 

Note that both Theorems \ref{jper} and \ref{rk1per} only consider simultaneous similarity for $2$-tuples, which play  key roles in studying the problem of generalized spectral characterizations of  graphs. Motivated by a  refinement of generalized spectral characterization of graphs,  the following natural problem was suggested in a recent paper of Wang and Zhao \cite{wang2026AAM}. 
\begin{problem}[\cite{wang2026AAM}]\label{pb}
	Let  $\mathcal{A}=(A,A+\alpha_1\alpha_1^\T,\ldots,A+\alpha_m\alpha_m^\T)$ and $\mathcal{B}=(B,B+\beta_1\beta_1^\T,\ldots,B+\beta_m\beta_m^\T)$, where $A,B$ are $n\times n$ real  symmetric matrices, and $\alpha_i,\beta_i$ are nonzero vectors in $\mathbb{R}^n$ for all $i\in \{1,\ldots,m\}$. Suppose that $\mathcal{A}$ and $\mathcal{B}$ are entrywise similar.  Under what conditions can we guarantee that $\mathcal{A}$ and $\mathcal{B}$ are simultaneously similar via an orthogonal matrix $Q$ such that $Q^\T A Q=B$ and $Q^\T \alpha_i=\beta_i$ for $i=1,2,\ldots,m$?
\end{problem}
In the same paper, Wang and Zhao considered the special case that $\alpha_i=\beta_i=e_i$ for $i=1,\ldots,m$, where $e_1,\ldots,e_m$ are some nonzero 0-1 vectors such that any two of them are orthogonal (equivalently, the positions of the ones are pairwise disjoint).  For this case, they established the following theorem.

\begin{theorem}[\cite{wang2026AAM}]\label{genblock}
Let $J_{i,j}=e_ie_j^\T$ for $1\le i\le j\le m$. If $A+\sum s_{i,j}J_{i,j}$ and $B+\sum s_{i,j}J_{i,j}$  have the same characteristic polynomial for any $s_{i,j}$ where $1\le i\le j\le m$, then there exists an orthogonal matrix $Q$ such that $Q^\T A Q=B$ and $Q^\T e_i=e_i$ for $i=1,\ldots,m$.
\end{theorem}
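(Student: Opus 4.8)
The plan is to reduce the characteristic-polynomial hypothesis to an identity between two $m\times m$ matrices of resolvent entries, and then to reconstruct $Q$ eigenspace by eigenspace. First I would collect the perturbations into matrix form. Writing $U=[e_1,\ldots,e_m]\in\mathbb{R}^{n\times m}$ and letting $S=(s_{i,j})$ be the upper-triangular $m\times m$ matrix whose $(i,j)$ entry is $s_{i,j}$ for $i\le j$ (and $0$ otherwise), we have $\sum_{1\le i\le j\le m}s_{i,j}J_{i,j}=USU^\T$. Factoring $xI-A-USU^\T=(xI-A)\bigl(I-(xI-A)^{-1}USU^\T\bigr)$ and applying the Weinstein--Aronszajn (Sylvester) determinant identity $\det(I-PQ)=\det(I-QP)$ gives
\[
\det\bigl(xI-A-USU^\T\bigr)=\det(xI-A)\,\det\!\bigl(I_m-S\,W_A(x)\bigr),
\]
where $W_A(x):=U^\T(xI-A)^{-1}U$ is a symmetric $m\times m$ matrix of rational functions with $(W_A(x))_{i,j}=e_i^\T(xI-A)^{-1}e_j$; the analogous identity holds for $B$.

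Setting $S=0$ forces $\det(xI-A)=\det(xI-B)$, so $A$ and $B$ are cospectral, and dividing by this common polynomial leaves $\det\!\bigl(I_m-S\,W_A(x)\bigr)=\det\!\bigl(I_m-S\,W_B(x)\bigr)$ for every upper-triangular $S$ and every $x$. Here is the simplification over the multivariate-polynomial argument of Wang and Zhao: I would only compare the terms \emph{linear} in the entries $s_{i,j}$. Since the linear part of $\det(I_m-SW)$ is $-\operatorname{tr}(SW)$ and $\operatorname{tr}(SW)=\sum_{i\le j}s_{i,j}(W)_{j,i}$, matching coefficients gives $(W_A(x))_{j,i}=(W_B(x))_{j,i}$ for all $i\le j$; symmetry of $W_A,W_B$ then yields $W_A(x)=W_B(x)$ identically. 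Taking the residue at each eigenvalue $\lambda$ (using $(x-\lambda)(xI-A)^{-1}\to P_\lambda^A$, the orthogonal projection onto the $\lambda$-eigenspace $V_\lambda^A$) converts this into the family of Gram-type identities $U^\T P_\lambda^A U=U^\T P_\lambda^B U$.

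Finally I would build $Q$ from these identities. For a fixed eigenvalue $\lambda$, the identity $U^\T P_\lambda^A U=U^\T P_\lambda^B U$ says precisely that the projected tuples $(P_\lambda^A e_1,\ldots,P_\lambda^A e_m)$ and $(P_\lambda^B e_1,\ldots,P_\lambda^B e_m)$ have the same Gram matrix, because $e_i^\T P_\lambda e_j=\langle P_\lambda e_i,P_\lambda e_j\rangle$ for an orthogonal projection. Cospectrality guarantees $\dim V_\lambda^A=\dim V_\lambda^B$, so by the standard fact that equal Gram matrices are realized by an isometry there is an orthogonal map $Q_\lambda:V_\lambda^A\to V_\lambda^B$ with $Q_\lambda(P_\lambda^A e_i)=P_\lambda^B e_i$ for every $i$. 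Assembling the $Q_\lambda$ across the orthogonal eigenspace decompositions $\mathbb{R}^n=\bigoplus_\lambda V_\lambda^A=\bigoplus_\lambda V_\lambda^B$ produces an orthogonal matrix that intertwines the spectral projections, giving $Q^\T A Q=B$ after fixing orientation, and that sends $e_i=\sum_\lambda P_\lambda^A e_i$ to $\sum_\lambda P_\lambda^B e_i=e_i$, i.e.\ $Q^\T e_i=e_i$. I expect the main obstacle to be this gluing step: one must check that the per-eigenspace isometries, determined only on the spans of the projected vectors, extend to the orthogonal complements (of equal dimension) and combine into a single orthogonal $Q$ achieving both $Q^\T A Q=B$ and $Q^\T e_i=e_i$ with a consistent choice of direction. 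Notably the argument never uses that the $e_i$ are $0$-$1$ or pairwise orthogonal, which is exactly what opens the door to the promised extension to arbitrary rank-one perturbations.
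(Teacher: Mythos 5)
Your proof is correct, and it reaches the key intermediate fact --- that for each eigenvalue $\lambda$ the projected families $\{P_\lambda^A e_i\}$ and $\{P_\lambda^B e_i\}$ share a Gram matrix --- by a genuinely different route than the paper. The paper never touches the full multivariate family $A+USU^\T$: it deduces Theorem~\ref{genblock} from Theorem~\ref{main1}, whose hypothesis only involves the rank-one perturbations $A+\gamma\gamma^\T$ with $\gamma=\sum_{i\in S}\alpha_i$, $|S|\le 2$; the diagonal Gram entries come from Lemma~\ref{samelen} and the off-diagonal ones from the polarization identity $\|u+v\|^2=\|u\|^2+\|v\|^2+2\langle u,v\rangle$ (Proposition~\ref{real}). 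You instead keep all $\binom{m+1}{2}$ parameters, apply Weinstein--Aronszajn to get $\det(I_m-SW_A(x))=\det(I_m-SW_B(x))$, and extract only the part linear in $S$ to conclude $W_A(x)=W_B(x)$, then take residues; this is clean and uses strictly less than the full hypothesis (only the first-order behaviour at $S=0$), but it consumes a hypothesis quantified over a continuum of perturbations, whereas the paper's route isolates the finitely many rank-one cospectrality conditions that actually suffice --- which is precisely the improvement recorded in Theorem~\ref{main1}. The endgame is identical: equal Gram matrices give per-eigenspace orthogonal maps $Q_k$ (Lemma~\ref{gm}), assembled as $Q=\sum_k P_kQ_k^\T R_k^\T$. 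Your worry about the gluing step is unfounded: the Gram lemma already supplies an orthogonal map of the full $r_k$-dimensional eigenspaces (equal dimensions by cospectrality), the eigenspaces exhaust $\mathbb{R}^n$, and no orientation needs fixing since $A$ acts as a scalar on each eigenspace, so $Q^\T AQ=B$ holds for any choice of the $Q_k$. Your closing observation that nothing uses orthogonality or $0$--$1$ structure of the $e_i$ matches the paper's motivation for Theorem~\ref{main1}.
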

In a recent manuscript \cite{xu2026}, under the additional assumption that $A$ and $B$ are adjacency matrices (or nonnegative matrices), Xu and Zhao simplify the condition of Theorem \ref{genblock} significantly. 
\begin{theorem}[\cite{xu2026}]\label{genblockdiag}
Let $J_{i,i}=e_ie_i^\T$ for $1\le i\le m$. Suppose that $A$ and $B$ are adjacency matrices of some graphs of the same order. If $A+\sum s_{i}J_{i,i}$ and $B+\sum s_{i}J_{i,i}$  have the same characteristic polynomial for any $s_{i}$ where $1\le i\le m$, then there exists an orthogonal matrix $Q$ such that $Q^\T A Q=B$ and $Q^\T e_i=e_i$ for $i=1,\ldots,m$.
\end{theorem}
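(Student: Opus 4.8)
The plan is to reduce the whole statement to an identity between two matrix-valued rational functions, and then to use nonnegativity to remove a sign ambiguity. Write $W=[e_1\;\cdots\;e_m]$ and $S=\diag(s_1,\ldots,s_m)$, so that $\sum_i s_iJ_{i,i}=WSW^\T$. Applying the Weinstein--Aronszajn (matrix determinant) identity gives
$$\det\!\big(xI-A-WSW^\T\big)=\det(xI-A)\,\det\!\big(I-S\,M_A(x)\big),\qquad M_A(x):=W^\T(xI-A)^{-1}W,$$
and similarly for $B$. Taking $S=0$ shows $A$ and $B$ are cospectral, and dividing out the common factor $\det(xI-A)=\det(xI-B)$ reduces the hypothesis to $\det(I-S\,M_A(x))=\det(I-S\,M_B(x))$ for all $S$. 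Since the determinant is of degree at most one in each $s_i$, comparing the coefficients of the squarefree monomials $\prod_{i\in I}s_i$ shows that \emph{every} principal minor of $M_A(x)$ equals the corresponding principal minor of $M_B(x)$. Writing $f^A_{ij}(x)=e_i^\T(xI-A)^{-1}e_j$ and using that $M_A$ is symmetric, the $1\times1$ minors give $f^A_{ii}=f^B_{ii}$ and the $2\times2$ minors then give
$$\big(f^A_{ij}(x)\big)^2=\big(f^B_{ij}(x)\big)^2\qquad(1\le i,j\le m)$$
as an identity of rational functions.

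The main point — and the only place where the nonnegativity hypothesis is used — is to upgrade this equality of squares to an equality of the functions themselves. Since the field of rational functions is an integral domain, $\big(f^A_{ij}\big)^2=\big(f^B_{ij}\big)^2$ forces $f^A_{ij}=\varepsilon_{ij}\,f^B_{ij}$ for a single sign $\varepsilon_{ij}\in\{+1,-1\}$ (both functions vanishing identically if either does). Now expand
$$f^A_{ij}(x)=\sum_{p\ge0}\frac{e_i^\T A^p e_j}{x^{p+1}},$$
valid for real $x$ exceeding the spectral radius of $A$. Because $A$ is a nonnegative matrix and $e_i,e_j$ are nonnegative vectors, every coefficient $e_i^\T A^p e_j$ is nonnegative; hence $f^A_{ij}(x)\ge0$ for all large real $x$, and is eventually strictly positive unless $f^A_{ij}\equiv0$. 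The same holds for $B$. Comparing signs for large $x$ forces $\varepsilon_{ij}=+1$, so $f^A_{ij}=f^B_{ij}$ for all $i,j$; that is, $M_A(x)=M_B(x)$ identically. I expect this sign-resolution step to be the crux: without nonnegativity the signs $\varepsilon_{ij}$ genuinely need not all be $+1$, which is precisely why the earlier results required the cross terms $J_{i,j}$ (to read off the residues directly rather than only their squares).

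Finally I would convert $M_A=M_B$ into the desired orthogonal matrix. Let $\lambda_1,\ldots,\lambda_r$ be the common distinct eigenvalues with spectral projections $P^A_k,P^B_k$, so that $M_A(x)=\sum_k (x-\lambda_k)^{-1}W^\T P^A_kW$; taking residues at each $\lambda_k$ and recalling that $P_k$ is a symmetric idempotent shows that the Gram matrices $G^A_k:=\big(\langle P^A_ke_i,P^A_ke_j\rangle\big)_{i,j}$ and $G^B_k$ coincide for every $k$. For fixed $k$, the families $\{P^A_ke_i\}_i$ and $\{P^B_ke_i\}_i$ then have equal Gram matrices and lie in spaces $\operatorname{Im}P^A_k,\operatorname{Im}P^B_k$ of equal dimension (cospectrality), so there is an isometry $R_k:\operatorname{Im}P^A_k\to\operatorname{Im}P^B_k$ with $R_k(P^A_ke_i)=P^B_ke_i$ for all $i$. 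Setting $Q^\T:=\bigoplus_k R_k$ yields an orthogonal matrix; since $Q^\T$ carries the $\lambda_k$-eigenspace of $A$ onto that of $B$ we get $BQ^\T=Q^\T A$, hence $Q^\T AQ=B$, and summing $R_k(P^A_ke_i)=P^B_ke_i$ over $k$ gives $Q^\T e_i=\sum_kP^B_ke_i=e_i$, as required. The only routine facts left to check are the standard statements that equal Gram matrices produce such an isometry (matching linear relations) and that a direct sum of the $R_k$ is orthogonal.
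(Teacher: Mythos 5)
Your proof is correct and follows essentially the same route as the paper's argument for its Theorem \ref{main2} (of which Theorem \ref{genblockdiag} is the special case $\alpha_i=\beta_i=e_i$): Weinstein--Aronszajn to extract the per-eigenvalue quantities $\langle P_ke_i,P_ke_j\rangle$ up to a sign, nonnegativity of $e_i^\T A^pe_j$ to force that sign to be $+1$ (your power-series-at-infinity argument is the same computation as the paper's Vandermonde argument via $\alpha^\T A^i\tilde\alpha=-\beta^\T B^i\tilde\beta\ge 0$), and then the equal-Gram-matrix lemma to build per-eigenspace isometries assembled into $Q$. The only cosmetic differences are that you package all $m$ vectors into one matrix $M_A(x)$ and read off $1\times1$ and $2\times2$ principal minors, whereas the paper works with pairwise rank-two perturbations $A+\alpha\alpha^\T+\tilde\alpha\tilde\alpha^\T$ one pair at a time.
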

Note that in Theorem \ref{genblockdiag}, the summation contains exactly $m$ items, whereas the corresponding summation in Theorem \ref{genblock} contains as many as $\binom{m+1}{2}$ items. The main aim of this paper is to generalize Theorems \ref{genblock} and \ref{genblockdiag} from the special case $\alpha_i=\beta_i=e_i$ to any vectors. The proof uses a technique introduced recently in \cite{wang2025EJC} to fill a small gap in the proof of a result of Farrugia \cite{farrugia2019EJC}.   
\begin{theorem}\label{main1}
	Let $A,B$ be two symmetric real matrices of order $n$ and $\alpha_1,\ldots,\alpha_m,\beta_1,\ldots,\beta_m$ be vectors in $\mathbb{R}^n$. Then the following two statements are equivalent:
	
\noindent\textup{(i)} There exists an orthogonal matrix such that $Q^\T AQ=B$ and $Q^\T \alpha_i=\beta_i$ for $i=1,\ldots,m$.

\noindent\textup{(ii)} Two matrices $A+\left(\sum_{i\in S}\alpha_i\right)\left(\sum_{i\in S}\alpha_i\right)^\T$ and $B+\left(\sum_{i\in S}\beta_i\right)\left(\sum_{i\in S}\beta_i\right)^\T$ are similar for any subset $S\subset \{1,\ldots,m\}$ with cardinality at most 2.
\end{theorem}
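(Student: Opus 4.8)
The plan is to prove the two implications separately, with (i) $\Rightarrow$ (ii) being a direct verification and (ii) $\Rightarrow$ (i) carrying all the content. For the forward direction, given an orthogonal $Q$ with $Q^\T A Q = B$ and $Q^\T \alpha_i = \beta_i$, I would set $v_S = \sum_{i\in S}\alpha_i$ and observe that $Q^\T v_S = \sum_{i\in S}\beta_i$, so conjugating $A + v_S v_S^\T$ by $Q$ yields $B + \bigl(\sum_{i\in S}\beta_i\bigr)\bigl(\sum_{i\in S}\beta_i\bigr)^\T$; this gives orthogonal, hence ordinary, similarity for every $S$.

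For the substantive direction, the first step is to convert the similarity hypotheses into identities of rational functions via the Weinstein--Aronszajn (matrix determinant) identity
\[
\det(xI - A - vv^\T) = \det(xI-A)\bigl(1 - v^\T(xI-A)^{-1}v\bigr),
\]
valid for any vector $v$. Since similarity of symmetric matrices is equivalent to equality of characteristic polynomials, applying this with $v=\sum_{i\in S}\alpha_i$ and $v=\sum_{i\in S}\beta_i$ and cancelling the common factor $\det(xI-A)=\det(xI-B)$ (furnished by the case $S=\emptyset$), I would extract from the singletons the diagonal identities $\alpha_i^\T(xI-A)^{-1}\alpha_i=\beta_i^\T(xI-B)^{-1}\beta_i$, and from the two-element subsets, by polarization (subtracting off the already-known diagonal terms and using symmetry of the resolvent), the off-diagonal identities $\alpha_i^\T(xI-A)^{-1}\alpha_j=\beta_i^\T(xI-B)^{-1}\beta_j$. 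Thus (ii) is equivalent to the package: $A,B$ are cospectral and $\alpha_i^\T(xI-A)^{-1}\alpha_j=\beta_i^\T(xI-B)^{-1}\beta_j$ for all $i,j$. It is precisely here that restricting to $|S|\le 2$ is seen to lose nothing.

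The second step reads these resolvent identities through the spectral decompositions $A=\sum_k \lambda_k P_k$ and $B=\sum_k \lambda_k R_k$, where cospectrality gives common eigenvalues $\lambda_k$ and equal multiplicities $\dim V_k=\dim W_k$ for the eigenspaces $V_k=\mathrm{range}\,P_k$ and $W_k=\mathrm{range}\,R_k$. Comparing residues at each pole in the partial-fraction expansion
\[
\alpha_i^\T(xI-A)^{-1}\alpha_j=\sum_k \frac{\alpha_i^\T P_k\alpha_j}{x-\lambda_k}
\]
collapses the rational-function identities into the finite scalar family $\alpha_i^\T P_k\alpha_j=\beta_i^\T R_k\beta_j$ for every $k$ and all $i,j$. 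Since $\alpha_i^\T P_k\alpha_j=\langle P_k\alpha_i,P_k\alpha_j\rangle$, this says exactly that, on each eigenspace, the Gram matrix of $\{P_k\alpha_i\}_i$ in $V_k$ equals that of $\{R_k\beta_i\}_i$ in $W_k$.

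The final step, which I expect to be the crux, assembles the orthogonal matrix eigenspace by eigenspace. Equal Gram matrices force $\mathrm{span}\{P_k\alpha_i\}$ and $\mathrm{span}\{R_k\beta_i\}$ to have the same dimension and provide a linear isometry between them sending $R_k\beta_i\mapsto P_k\alpha_i$; because $\dim V_k=\dim W_k$, the orthogonal complements of these spans also match in dimension, so the isometry extends to an orthogonal map $Q_k\colon W_k\to V_k$. Letting $Q$ act as $Q_k$ on each $W_k$ yields an orthogonal matrix mapping each eigenspace of $B$ onto the corresponding eigenspace of $A$, whence $Q^\T A Q=B$, and by construction $Q\beta_i=\sum_k Q_k(R_k\beta_i)=\sum_k P_k\alpha_i=\alpha_i$, i.e.\ $Q^\T\alpha_i=\beta_i$. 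The delicate points are that the per-eigenspace isometries glue into one global orthogonal matrix (immediate from the orthogonal direct-sum decompositions of $\mathbb{R}^n$) and that the extension over the complementary subspaces is carried out correctly; this extension is the gap-filling device borrowed from \cite{wang2025EJC}.
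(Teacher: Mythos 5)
Your proposal is correct and follows essentially the same route as the paper: both reduce (ii) to the statement that, for each eigenvalue, the projections of the $\alpha_i$ and of the $\beta_i$ onto the corresponding eigenspaces of $A$ and $B$ have equal Gram matrices (you via Weinstein--Aronszajn, partial fractions and polarization of the resolvent; the paper via the same determinant identity packaged as Lemma~\ref{samelen} plus polarization of the projected norms in Proposition~\ref{real}), and both then build the orthogonal matrix eigenspace by eigenspace and glue along the orthogonal direct-sum decomposition. The only differences are notational (you polarize at the resolvent level rather than at the residue level, and phrase the per-eigenspace step as extending an isometry between spans instead of invoking the Gram-matrix lemma directly), so no further comment is needed.
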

We note that the second statement in Theorem \ref{main1} means that (1) $\mathcal{A}=(A,A+\alpha_1\alpha_1^\T,\ldots,A+\alpha_m\alpha_m^\T)$ and $\mathcal{B}=(B,B+\beta_1\beta_1^\T,\ldots,B+\beta_m\beta_m^\T)$ are entrywise similar, and (2) 	$A+(\alpha_i+\alpha_j)(\alpha_i+\alpha_j)^\T\quad \text{and}\quad B+(\beta_i+\beta_j)(\beta_i+\beta_j)^\T $ are similar for any $i,j$ with $1\le i<j\le m$. Theorem \ref{main1} clearly improves upon Theorem \ref{genblock}. We make no assumption  on the vectors $\alpha_i,\beta_i$ in Theorem \ref{main1}. Moreover, the cospectrality condition in Theorem \ref{genblock} is greatly simplified and only symmetric matrices are involved in the second statement of Theorem \ref{main1}. 

We also obtain an improvement of Theorem \ref{genblockdiag} for nonnegative matrices and vectors, which we state as the following theorem.

\begin{theorem}	\label{main2}
	Let $A,B$ be two nonnegative symmetric real matrices of order $n$ and $\alpha_i,\beta_i$ be nonnegative vectors in $\mathbb{R}^n$ for $i=1,\ldots,m$. Then the following two statements are equivalent:
	
	\noindent\textup{(i)} There exists an orthogonal matrix such that $Q^\T AQ=B$ and $Q^\T \alpha_i=\beta_i$ for $i=1,\ldots,m$.
	
	\noindent\textup{(ii)} Two matrices $A+\sum_{i\in S}\alpha_i\alpha_i^\T$ and $B+\sum_{i\in S}\beta_i\beta_i^\T$ are similar for any subset $S\subset \{1,\ldots,m\}$ with cardinality at most 2.
	\end{theorem}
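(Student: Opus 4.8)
The plan is to deduce Theorem~\ref{main2} from Theorem~\ref{main1}, which we may treat as established. The implication (i)$\Rightarrow$(ii) is immediate and uses neither nonnegativity nor the cardinality restriction: if $Q$ is orthogonal with $Q^\T A Q=B$ and $Q^\T\alpha_i=\beta_i$, then for every $S$ one has $Q^\T\bigl(A+\sum_{i\in S}\alpha_i\alpha_i^\T\bigr)Q=B+\sum_{i\in S}\beta_i\beta_i^\T$, so the two matrices are even orthogonally similar. The whole content therefore lies in (ii)$\Rightarrow$(i). Since the hypotheses in (ii) for $|S|\le 1$ coincide verbatim with those of Theorem~\ref{main1}(ii), it suffices to show that the cardinality-$2$ hypothesis of Theorem~\ref{main2}(ii) implies the cardinality-$2$ hypothesis of Theorem~\ref{main1}(ii); one then simply invokes Theorem~\ref{main1}.

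To make the translation I would pass to resolvent bilinear forms. Write $\phi(x)=\det(xI-A)$; the case $S=\emptyset$ gives $A\sim B$ and hence $\det(xI-B)=\phi(x)$ as well. For vectors $u,v$ set $W_A(u,v)=u^\T(xI-A)^{-1}v\in\mathbb{R}(x)$, and likewise $W_B$. By the Weinstein--Aronszajn identity (matrix determinant lemma), writing $U=[\alpha_i\mid\alpha_j]$,
\[
\det\bigl(xI-A-\alpha_i\alpha_i^\T-\alpha_j\alpha_j^\T\bigr)=\phi(x)\,\det\bigl(I_2-U^\T(xI-A)^{-1}U\bigr),
\]
where the $2\times2$ determinant equals $\bigl(1-W_A(\alpha_i,\alpha_i)\bigr)\bigl(1-W_A(\alpha_j,\alpha_j)\bigr)-W_A(\alpha_i,\alpha_j)^2$ by symmetry of the resolvent. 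The singleton case gives characteristic polynomial $\phi(x)\bigl(1-W_A(\alpha_i,\alpha_i)\bigr)$ for $A+\alpha_i\alpha_i^\T$, so cospectrality for $|S|=1$ forces $W_A(\alpha_i,\alpha_i)=W_B(\beta_i,\beta_i)$ for each $i$. Feeding this back into the $|S|=2$ identity, the product terms cancel and the hypothesis collapses to $W_A(\alpha_i,\alpha_j)^2=W_B(\beta_i,\beta_j)^2$ in $\mathbb{R}(x)$, whence $W_A(\alpha_i,\alpha_j)=\pm\,W_B(\beta_i,\beta_j)$.

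The crux, and the only place nonnegativity enters, is removing this sign ambiguity. Expanding about $x=\infty$ gives $W_A(\alpha_i,\alpha_j)=\sum_{k\ge0}(\alpha_i^\T A^k\alpha_j)\,x^{-(k+1)}$, and because $A$, $\alpha_i$, $\alpha_j$ are all nonnegative every coefficient $\alpha_i^\T A^k\alpha_j$ is nonnegative; the same holds for $W_B(\beta_i,\beta_j)$. If the minus sign occurred, then $\alpha_i^\T A^k\alpha_j=-\beta_i^\T B^k\beta_j$ for all $k$, forcing both sides to vanish identically, so $W_A(\alpha_i,\alpha_j)=0=W_B(\beta_i,\beta_j)$; in either case $W_A(\alpha_i,\alpha_j)=W_B(\beta_i,\beta_j)$. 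This equality is exactly what is needed to verify the cardinality-$2$ condition of Theorem~\ref{main1}(ii): expanding $W_A(\alpha_i+\alpha_j,\alpha_i+\alpha_j)=W_A(\alpha_i,\alpha_i)+2W_A(\alpha_i,\alpha_j)+W_A(\alpha_j,\alpha_j)$ and using the three equalities now in hand shows that $A+(\alpha_i+\alpha_j)(\alpha_i+\alpha_j)^\T$ and $B+(\beta_i+\beta_j)(\beta_i+\beta_j)^\T$ have the common characteristic polynomial $\phi(x)\bigl(1-W_A(\alpha_i+\alpha_j,\alpha_i+\alpha_j)\bigr)$, and hence, being symmetric, are similar. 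Thus Theorem~\ref{main1}(ii) holds and Theorem~\ref{main1} supplies the desired $Q$.

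I expect the sign-resolution step to be the main obstacle: once the walk-generating interpretation of the resolvent forms is in place it is short, but it is precisely what distinguishes Theorem~\ref{main2}—where the weaker, rank-one-summed condition suffices because nonnegativity pins down the off-diagonal sign—from Theorem~\ref{main1}, where the $(\alpha_i+\alpha_j)$-perturbation is needed to furnish that sign directly. A minor point to verify throughout is that ``similar'' may be read as ``equal characteristic polynomial,'' which is legitimate since every matrix in play is symmetric and therefore orthogonally diagonalizable.
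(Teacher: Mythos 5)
Your proof is correct. Its computational core coincides with the paper's: both apply the Weinstein--Aronszajn identity to the rank-two perturbation, use the $|S|\le 1$ cases to cancel the diagonal entries of the resulting $2\times 2$ determinant, arrive at $W_A(\alpha_i,\alpha_j)^2=W_B(\beta_i,\beta_j)^2$, and kill the sign ambiguity by noting that $\alpha_i^\T A^k\alpha_j\ge 0$ and $\beta_i^\T B^k\beta_j\ge 0$ force both sides to vanish if the minus sign held. (The paper writes the same rational function in partial-fraction form, $\sum_k\langle P_k^\T\alpha_i,P_k^\T\alpha_j\rangle/(x-\lambda_k)$, and uses a Vandermonde argument in place of your Laurent expansion at infinity; the coefficients being compared are literally the same numbers.) Where you genuinely differ is in the packaging. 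The paper records the conclusion as equality of the eigenspace-projected inner products (its Proposition~\ref{rank2}) and then re-runs the Gram-matrix construction from the proof of Theorem~\ref{main1}; you instead show that, under nonnegativity, condition (ii) of Theorem~\ref{main2} implies condition (ii) of Theorem~\ref{main1} --- i.e., cospectrality of $A+(\alpha_i+\alpha_j)(\alpha_i+\alpha_j)^\T$ with $B+(\beta_i+\beta_j)(\beta_i+\beta_j)^\T$ --- and invoke Theorem~\ref{main1} as a black box. Your reduction is more modular and isolates a clean implication between the two hypotheses that the paper leaves implicit; the paper's route keeps the eigenspace-level data visible, which is what actually assembles the orthogonal matrix $Q$. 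Either way the argument is complete.
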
	
\section{Proof of Theorems \ref{main1} and \ref{main2}}

Let $A$ be an $n\times n$ real symmetric matrix and $\lambda_1,\ldots,\lambda_s$ be all its distinct eigenvalues with multiplicities $r_1,\ldots, r_s$, respectively. Let $P_i$ be any $n\times r_i$ matrix whose columns consist of an orthonormal basis of $\mathcal{E}_{\lambda_i}(A)$, the eigenspace of $A$ corresponding to $\lambda_i$. Then $A$ has the spectral decomposition 
$$A=\lambda_1 P_1P_1^\T+\cdots+\lambda_s P_sP_s^\T.$$
We note that  $P_iP_i^\T$ is well defined although $P_i$ is not unique. Indeed, if $\tilde{P}_i$ consists of another orthogonal basis of $\mathcal{E}_{\lambda_i}(A)$, then we must have $\tilde{P}_i=P_iQ$ for some orthogonal matrix $Q$, which clearly implies $\tilde{P}_i\tilde{P}^\T_i= P_iP_i^\T$.

The following result can be proved using spectral decomposition and some standard skills. See, e.g.,  \cite{qiu2023} for details.
\begin{lemma}\label{samelen}
	Let $A$ and $B$ be two similar real symmetric matrices with spectral decomposition $A=\sum_{k=1}^s\lambda_k P_kP_k^\T$ and $B=\sum_{k=1}^s\lambda_k R_kR_k^\T$, respectively. For $\alpha,\beta\in \mathbb{R}^n$, if $A+\alpha\alpha^\T$ and  $B+\beta\beta^\T$ are similar, then $||P_k^\T \alpha||=||R_k^\T \beta||$ for $k=1,\ldots,s$.
\end{lemma}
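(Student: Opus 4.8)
The plan is to compute the characteristic polynomial of each rank-one perturbation explicitly via the Weinstein--Aronszajn identity (equivalently, the matrix determinant lemma) and then recover the quantities $\|P_k^\T\alpha\|$ and $\|R_k^\T\beta\|$ as residues. Since $A$ and $B$ are similar real symmetric matrices, they share the same eigenvalues with the same multiplicities, so I would write $\phi_A(x)=\det(xI-A)=\prod_{k=1}^s (x-\lambda_k)^{r_k}$ and note $\phi_A=\phi_B=:\phi$.

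The first step is to record the rational-function form of the perturbed characteristic polynomial. Using the resolvent expansion $(xI-A)^{-1}=\sum_k (x-\lambda_k)^{-1}P_kP_k^\T$, the matrix determinant lemma gives, for every $x$ that is not an eigenvalue of $A$,
\[
\det(xI-A-\alpha\alpha^\T)=\phi(x)\Bigl(1-\alpha^\T(xI-A)^{-1}\alpha\Bigr)=\phi(x)\Bigl(1-\sum_{k=1}^s\frac{\|P_k^\T\alpha\|^2}{x-\lambda_k}\Bigr),
\]
where I used $\alpha^\T P_kP_k^\T\alpha=\|P_k^\T\alpha\|^2$. The identical computation applied to $B$ yields the same expression with $P_k,\alpha$ replaced by $R_k,\beta$ and the same eigenvalues $\lambda_k$.

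The second step combines the two hypotheses. Similarity of $A+\alpha\alpha^\T$ and $B+\beta\beta^\T$ means their characteristic polynomials coincide; equating the two expressions and cancelling the common factor $\phi(x)$ yields the identity of rational functions
\[
\sum_{k=1}^s\frac{\|P_k^\T\alpha\|^2}{x-\lambda_k}=\sum_{k=1}^s\frac{\|R_k^\T\beta\|^2}{x-\lambda_k}.
\]
Because the $\lambda_k$ are pairwise distinct, each summand is a simple pole, so comparing the residue at $x=\lambda_k$ gives $\|P_k^\T\alpha\|^2=\|R_k^\T\beta\|^2$ for every $k$; taking nonnegative square roots completes the argument.

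I expect the only delicate point to be bookkeeping rather than substance: one must justify passing from the determinant identity, a priori valid only where $xI-A$ is invertible, to a genuine equality of rational (indeed polynomial) functions, and then argue that the residue extraction is legitimate. This is handled by observing that after clearing denominators every expression is an honest polynomial in $x$, so an identity holding at all but finitely many $x$ holds identically; the simple-pole structure guaranteed by the distinctness of the $\lambda_k$ then makes the residue at $\lambda_k$ equal to exactly $\|P_k^\T\alpha\|^2$.
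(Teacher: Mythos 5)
Your proof is correct and is essentially the argument the paper has in mind: the paper omits a proof of Lemma \ref{samelen}, citing \cite{qiu2023} and describing it as ``spectral decomposition and some standard skills,'' but the very same Weinstein--Aronszajn/resolvent computation with partial-fraction (residue) comparison is carried out explicitly in the paper's proof of the rank-two analogue, Proposition \ref{rank2}. Your handling of the passage from a determinant identity off the spectrum to a genuine rational-function identity, and the residue extraction at the distinct poles $\lambda_k$, is sound.
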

A direct consequence of Lemma \ref{samelen} is the following
\begin{proposition}\label{real}
	Let $A$ and $B$ be two similar real symmetric matrices with spectral decomposition $A=\sum_{k=1}^s\lambda_k P_kP_k^\T$ and $B=\sum_{k=1}^s\lambda_k R_kR_k^\T$, respectively. For $\alpha,\beta\in \mathbb{R}^n$, if $(A+\alpha\alpha^\T,A+\tilde{\alpha}\tilde{\alpha}^\T, A+(\alpha+\tilde{\alpha})(\alpha+\tilde{\alpha})^\T)$ and  $(B+\beta\beta^\T,B+\tilde{\beta}\tilde{\beta}^\T, B+(\beta+\tilde{\beta})(\beta+\tilde{\beta})^\T)$ are entrywise similar, then  $\langle P_k^\T\alpha,P_k^\T\tilde{\alpha}\rangle =\langle R_k^\T\beta,R_k^\T\tilde{\beta}\rangle$  for $k=1,\ldots,s$.
\end{proposition}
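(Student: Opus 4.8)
The plan is to apply Lemma \ref{samelen} three times—once to each coordinate of the two entrywise-similar triples—and then to isolate the inner product from the three resulting norm identities by means of the polarization identity. Since $A$ and $B$ are assumed similar and share the spectral data $\lambda_1,\ldots,\lambda_s$, the hypotheses of Lemma \ref{samelen} are met for each application.

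First I would extract the three norm equalities. Because $A+\alpha\alpha^\T$ and $B+\beta\beta^\T$ are similar, Lemma \ref{samelen} gives $\|P_k^\T\alpha\|=\|R_k^\T\beta\|$ for every $k$. Applying the lemma to the second coordinate of the triples yields $\|P_k^\T\tilde{\alpha}\|=\|R_k^\T\tilde{\beta}\|$, and applying it to the third coordinate (with perturbation vectors $\alpha+\tilde{\alpha}$ and $\beta+\tilde{\beta}$) yields $\|P_k^\T(\alpha+\tilde{\alpha})\|=\|R_k^\T(\beta+\tilde{\beta})\|$. All three identities hold simultaneously for each $k\in\{1,\ldots,s\}$.

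Next I would expand the squared norms in the third identity via polarization:
\begin{align}
\|P_k^\T(\alpha+\tilde{\alpha})\|^2 &= \|P_k^\T\alpha\|^2 + 2\langle P_k^\T\alpha,P_k^\T\tilde{\alpha}\rangle + \|P_k^\T\tilde{\alpha}\|^2,\\
\|R_k^\T(\beta+\tilde{\beta})\|^2 &= \|R_k^\T\beta\|^2 + 2\langle R_k^\T\beta,R_k^\T\tilde{\beta}\rangle + \|R_k^\T\tilde{\beta}\|^2.
\end{align}
The left-hand sides agree by the third norm identity, while the squared-norm terms on the right cancel in pairs by the first two norm identities. What remains is $2\langle P_k^\T\alpha,P_k^\T\tilde{\alpha}\rangle = 2\langle R_k^\T\beta,R_k^\T\tilde{\beta}\rangle$, which gives the claimed equality after dividing by $2$.

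This argument is essentially routine once Lemma \ref{samelen} is in hand, so I do not anticipate a genuine obstacle; the only point requiring care is verifying that the similarity hypotheses of Lemma \ref{samelen} are satisfied at each of the three invocations, which follows directly from the assumed entrywise similarity of the two triples together with the similarity of $A$ and $B$.
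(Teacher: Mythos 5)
Your proposal is correct and follows exactly the paper's own argument: three applications of Lemma \ref{samelen} to obtain the norm identities, followed by expanding $\|u+v\|^2=\|u\|^2+\|v\|^2+2\langle u,v\rangle$ to isolate the inner products. No issues.
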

\begin{proof}
	By Lemma \ref{samelen}, we have $||P_k^\T \alpha||=||R_k^\T \beta||$, $||P_k^\T \tilde{\alpha}||=||R_k^\T \tilde{\beta}||$, and $||P_k^\T (\alpha+\tilde{\alpha})||=||R_k^\T (\beta+\tilde{\beta})||$ for $k=1,\ldots,s$. Using the identity $||u+v||^2=||u||^2+||v||^2+2\langle u,v\rangle$, the last equality is equivalent to  
	$$||P_k^\T \alpha||^2+||P_k^\T\tilde{\alpha}||^2+2\langle P_k^\T \alpha, P_k^\T \tilde{\alpha}\rangle=||R_k^\T \beta||^2+||R_k^\T\tilde{\beta}||^2+2\langle R_k^\T \beta, R_k^\T \tilde{\beta}\rangle,$$
	and hence the proposition follows.
\end{proof}
With a little work,  we can obtain an additional version of Proposition \ref{real} for nonnegative case.
\begin{proposition}\label{rank2}
	Let $A$ and $B$ be two similar real symmetric nonnegative matrices with spectral decomposition $A=\sum_{k=1}^s\lambda_k P_kP_k^\T$ and $B=\sum_{k=1}^s\lambda_k R_kR_k^\T$, respectively. Let $\alpha,\tilde{\alpha},\beta,\tilde{\beta}$ be nonnegative vectors in $\mathbb{R}^n$. If $(A+\alpha\alpha^\T,A+\tilde{\alpha}\tilde{\alpha}^\T, A+\alpha\alpha^\T+\tilde{\alpha}\tilde{\alpha}^\T)$ and $(B+\beta\beta^\T, B+\tilde{\beta}\tilde{\beta}^\T,B+\beta\beta^\T+\tilde{\beta}\tilde{\beta}^\T)$ are entrywise similar, then $\langle P_k^\T\alpha,P_k^\T\tilde{\alpha}\rangle =\langle R_k^\T\beta,R_k^\T\tilde{\beta}\rangle$ for $k=1,\ldots,s$.
\end{proposition}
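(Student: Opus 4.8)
The plan is to handle the rank-two perturbation in the third component through the Weinstein--Aronszajn identity, reducing the similarity to a scalar identity of rational functions, and then to remove a global sign ambiguity using the nonnegativity hypotheses. Write $a_k=\|P_k^\T\alpha\|^2$, $b_k=\|P_k^\T\tilde\alpha\|^2$, $c_k=\langle P_k^\T\alpha,P_k^\T\tilde\alpha\rangle$ and $a_k'=\|R_k^\T\beta\|^2$, $b_k'=\|R_k^\T\tilde\beta\|^2$, $d_k=\langle R_k^\T\beta,R_k^\T\tilde\beta\rangle$. Applying Lemma \ref{samelen} to the first two entrywise similarities gives $a_k=a_k'$ and $b_k=b_k'$ at once; the task is to prove $c_k=d_k$. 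Setting $W=(\alpha\;\tilde\alpha)$ and $\tilde W=(\beta\;\tilde\beta)$, the third perturbations are $A+WW^\T$ and $B+\tilde W\tilde W^\T$. Being symmetric and similar, they share a characteristic polynomial, and by the Weinstein--Aronszajn identity
$$\det(xI-A-WW^\T)=\det(xI-A)\,\det\!\big(I_2-W^\T(xI-A)^{-1}W\big),$$
with the analogous identity for $B$. Since $A$ and $B$ are similar, $\det(xI-A)=\det(xI-B)\not\equiv0$, so after cancellation the two $2\times2$ determinants agree as rational functions.

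Next I would expand $W^\T(xI-A)^{-1}W=\sum_k (x-\lambda_k)^{-1}G_k$ using $(xI-A)^{-1}=\sum_k (x-\lambda_k)^{-1}P_kP_k^\T$, where the Gram matrix is $G_k=\left(\begin{smallmatrix}a_k&c_k\\ c_k&b_k\end{smallmatrix}\right)$, and likewise $\tilde G_k=\left(\begin{smallmatrix}a_k&d_k\\ d_k&b_k\end{smallmatrix}\right)$ for $B$. Writing $f=\sum_k a_k/(x-\lambda_k)$, $g=\sum_k b_k/(x-\lambda_k)$, $h=\sum_k c_k/(x-\lambda_k)$ and $\tilde h=\sum_k d_k/(x-\lambda_k)$, each $2\times2$ determinant equals $(1-f)(1-g)-h^2$, respectively $(1-f)(1-g)-\tilde h^2$, where the $f$- and $g$-parts already coincide because $a_k=a_k'$ and $b_k=b_k'$. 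The determinant identity therefore collapses to $h^2=\tilde h^2$, so $h=\varepsilon\tilde h$ for a single global sign $\varepsilon\in\{+1,-1\}$; comparing residues at each simple pole $x=\lambda_k$ yields $c_k=\varepsilon d_k$ for all $k$. If $\varepsilon=+1$, we are done.

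The main obstacle is precisely to exclude a genuine $\varepsilon=-1$, and this is where nonnegativity is indispensable (it is exactly what is absent in Proposition \ref{real}). From $A^j=\sum_k\lambda_k^j P_kP_k^\T$ one gets $\alpha^\T A^j\tilde\alpha=\sum_k\lambda_k^j c_k$ and $\beta^\T B^j\tilde\beta=\sum_k\lambda_k^j d_k$, and both are nonnegative for every $j\ge0$ because $A^j,B^j$ are entrywise nonnegative and $\alpha,\tilde\alpha,\beta,\tilde\beta$ are nonnegative. If $\varepsilon=-1$, then $\sum_k\lambda_k^j d_k=-\sum_k\lambda_k^j c_k$, forcing $\sum_k\lambda_k^j c_k=0$ for all $j$; taking $j=0,1,\dots,s-1$ and inverting the Vandermonde matrix in the distinct eigenvalues $\lambda_1,\dots,\lambda_s$ gives $c_k=0$, and hence $d_k=-c_k=0$ as well. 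Thus $c_k=d_k$ holds in either case. I expect the Weinstein--Aronszajn reduction and the residue comparison to be routine; the only delicate point is fixing the sign $\varepsilon$, which the nonnegativity hypotheses dispose of cleanly.
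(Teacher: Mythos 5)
Your proposal is correct and follows essentially the same route as the paper: the Weinstein--Aronszajn identity reduces the third similarity to equality of the two $2\times 2$ determinants, Lemma \ref{samelen} kills the diagonal terms leaving $h^2=\tilde h^2$, and the sign ambiguity is resolved by the nonnegativity of $\alpha^\T A^j\tilde\alpha$ and $\beta^\T B^j\tilde\beta$ together with the invertibility of the Vandermonde matrix in the distinct eigenvalues. Your explicit framing via a single global sign $\varepsilon$ and residue comparison at the simple poles is just a slightly more formal phrasing of the paper's ``either all equal or all opposite'' dichotomy.
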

\begin{proof}
	Using the Weinstein-Aronszajn identity $|I_p-M_{p\times q}N_{q\times p}|=|I_q-N_{q\times p}M_{p\times q}|$, we obtain
	\begin{align}\label{hG}
		\left|xI-(A+\alpha\alpha^\T+\tilde{\alpha}\tilde{\alpha}^\T)\right|&=|xI-A|\cdot\begin{vmatrix}I_n-(xI-A)^{-1}(\alpha,\tilde{\alpha})\begin{pmatrix}\alpha^\T\\\tilde{\alpha}^\T\end{pmatrix}\end{vmatrix} \nonumber\\[5pt]
		&=|xI-A|\cdot\begin{vmatrix}I_2-\begin{pmatrix}\alpha^\T\\\tilde{\alpha}^\T\end{pmatrix}(xI-A)^{-1}(\alpha,\tilde{\alpha})\end{vmatrix}.\nonumber
	\end{align}
	By the spectral decomposition $A=\sum_{k=1}^s\lambda_k P_kP_k^\T$, we see that $$(xI-A)^{-1}=\sum_{k=1}^s\frac{1}{x-\lambda_k} P_kP_k^\T$$
	and hence
	\begin{align}
			\renewcommand{\arraystretch}{2.2}
		\begin{vmatrix}{\renewcommand{\arraystretch}{1}I_2-\begin{pmatrix}\alpha^\T\\\tilde{\alpha}^\T\end{pmatrix}(xI-A)^{-1}(\alpha,\tilde{\alpha})}\end{vmatrix}&=\begin{vmatrix}
			\displaystyle 1-\sum\limits_{k=1}^s\frac{||P_k^\T\alpha||^2}{x-\lambda_k} & \displaystyle -\sum\limits_{k=1}^s\frac{\langle P_k^\T \alpha, P_k^\T\tilde{\alpha}\rangle}{x-\lambda_k} \\
			\displaystyle -\sum\limits_{k=1}^s\frac{\langle P_k^\T \alpha, P_k^\T\tilde{\alpha}\rangle}{x-\lambda_k} & \displaystyle 1-\sum\limits_{k=1}^s\frac{||P_k^\T\tilde{\alpha}||^2}{x-\lambda_k}
		\end{vmatrix}. \nonumber
	\end{align}
	It follows that 
	\begin{equation}\label{aaa}
		\renewcommand{\arraystretch}{2.2}
		\left|xI-(A+\alpha\alpha^\T+\tilde{\alpha}\tilde{\alpha}^\T)\right|=|xI-A|\cdot\begin{vmatrix}
			\displaystyle 1-\sum\limits_{k=1}^s\frac{||P_k^\T\alpha||^2}{x-\lambda_k} & \displaystyle -\sum\limits_{k=1}^s\frac{\langle P_k^\T \alpha, P_k^\T\tilde{\alpha}\rangle}{x-\lambda_k} \\
			\displaystyle -\sum\limits_{k=1}^s\frac{\langle P_k^\T \alpha, P_k^\T\tilde{\alpha}\rangle}{x-\lambda_k} & \displaystyle 1-\sum\limits_{k=1}^s\frac{||P_k^\T\tilde{\alpha}||^2}{x-\lambda_k}
		\end{vmatrix}.
	\end{equation}
	Similarly, we have
	\begin{equation}\label{bbb}
		\renewcommand{\arraystretch}{2.2}
		\left|xI-(B+\beta\beta^\T+\tilde{\beta}\tilde{\beta}^\T)\right|=|xI-B|\cdot\begin{vmatrix}
			\displaystyle 1-\sum\limits_{k=1}^s\frac{||R_k^\T\beta||^2}{x-\lambda_k} & \displaystyle -\sum\limits_{k=1}^s\frac{\langle R_k^\T \beta, R_k^\T\tilde{\beta}\rangle}{x-\lambda_k} \\
			\displaystyle -\sum\limits_{k=1}^s\frac{\langle R_k^\T \beta, R_k^\T\tilde{\beta}\rangle}{x-\lambda_k} & \displaystyle 1-\sum\limits_{k=1}^s\frac{||R_k^\T\tilde{\beta}||^2}{x-\lambda_k}
		\end{vmatrix}.
	\end{equation}
	
	It is not difficult to see from the similarity assumption of Proposition \ref{rank2} together with Eqs. \eqref{aaa} and \eqref{bbb} that the two determinants of order 2 are equal. Moreover, by Lemma \ref{samelen}, we see that $||P_k^\T \alpha||=||R_k^\T \beta||$ and $||P_k^\T\tilde{\alpha}||=||R_k^\T\tilde{\beta}||$ for $k=1,\ldots,s$. It follows that
	$$
	\left(\displaystyle\sum\limits_{k=1}^s\frac{\langle P_k^\T \alpha,P_k^\T \tilde{\alpha}\rangle}{x-\lambda_k}\right)^2=\left(\displaystyle\sum\limits_{k=1}^s\frac{\langle R_k^\T \beta,R_k^\T \tilde{\beta}\rangle}{x-\lambda_k}\right)^2.
	$$
	This means that either $\langle P_k^\T\alpha, P_k^\T\tilde{\alpha}\rangle=\langle R_k^\T\beta, R_k^\T\tilde{\beta}\rangle$ for all $k$, or  $\langle P_k^\T\alpha, P_k^\T\tilde{\alpha}\rangle=-\langle R_k^\T\beta, R_k^\T\tilde{\beta}\rangle$ for all $k$.	Thus, to complete the proof of Proposition \ref{rank2}, it suffices to establish the following assertion.
	
	\noindent{\textbf{Claim}}: If $\langle P_k^\T\alpha, P_k^\T\tilde{\alpha}\rangle=-\langle R_k^\T\beta, R_k^\T\tilde{\beta}\rangle$ for all $k\in\{1,\ldots,s\}$, then 
	\[
	\langle P_k^\T\alpha, P_k^\T\tilde{\alpha}\rangle=\langle R_k^\T\beta, R_k^\T\tilde{\beta}\rangle=0 \quad \text{for all } k.
	\]
	
	Let $i$ be any integer in $\{0,1,\ldots,s-1\}$. From the spectral decomposition of $A$, we have 
	\begin{equation}\label{aAa}
		\alpha^\T A^{i}\tilde{\alpha}=\alpha^\T\left( \sum\limits_{k=1}^s
		\lambda_k^{i} P_kP_k^\T\right)\tilde{\alpha}=\sum\limits_{k=1}^s\lambda_k^i \langle P_k^\T \alpha,P_k^\T \tilde{\alpha}\rangle.
	\end{equation}
	Similarly, $\beta^\T B^{i}\tilde{\beta}=\sum\limits_{k=1}^s\lambda_k^i \langle R_k^\T \beta,R_k^\T \tilde{\beta}\rangle$. It follows from the condition of the Claim that $\alpha^\T A^i\tilde{\alpha}=-\beta^\T B^i \tilde{\beta}$. However, as the matrices $A,B$, and the vectors $\alpha,\tilde{\alpha},\beta,\tilde{\beta}$ are nonnegative, we must have $\alpha^\T A^i\tilde{\alpha}, \beta^\T B^i\tilde{\beta}\ge 0$.  This implies that all the numbers $\alpha^\T A^i\tilde{\alpha}$ and $\beta^\T B^i\tilde{\beta}$ are zero.  It follows from Eq.~\eqref{aAa} that 
	\begin{equation}\label{vd} 
		\sum\limits_{k=1}^s \lambda_k^i \langle P_k^\T\alpha, P_k^\T \tilde{\alpha}\rangle=0,\quad i=0,1,\ldots,s-1.
	\end{equation}
	As the Vandermonde matrix generated by $\lambda_1,\ldots,\lambda_s$ is invertible, we see from Eq.~\eqref{vd} that $\langle P_k^\T \alpha,P_k^\T \tilde{\alpha}\rangle=0$ for all $k\in \{1,\ldots,s\}$. As $\beta^\T B^i\tilde{\beta} =0$ for all $i$, the same argument indicates that  $\langle R_k^\T \beta,R_k^\T \tilde{\beta}\rangle=0$ for all $k\in \{1,\ldots,s\}$. This proves the Claim and hence completes the proof of Proposition \ref{rank2}.
\end{proof}

For a set of vectors $v_1,v_2,\ldots,v_m$ in $\mathbb{R}^n$, the \emph{Gram matrix} is the real symmetric matrix 
\begin{equation}
	\begin{pmatrix}
		v_1^\T v_1&v_1^\T v_2&\ldots&v_1^\T v_m\\
		v_2^\T v_1&v_2^\T v_2&\ldots&v_2^\T v_m\\
		\vdots&\vdots&\vdots&\vdots\\
		v_m^\T v_1&v_m^\T v_2&\ldots&v_m^\T v_m
	\end{pmatrix}
\end{equation}
We need the following basic result in Linear Algebra; see e.g. \cite[Theorem 7.3.11]{horn2013}.
\begin{lemma}\label{gm}
	Let $\{v_1,\ldots,v_m\}$ and $\{w_1,\ldots,w_m\}$ be two sets of vectors in $\mathbb{R}^n$. If they have the same Gram matrix, then there exists an orthogonal matrix $Q$ such that 
	$Q v_i=w_i$ for $i\in \{1,\ldots,m\}$.
\end{lemma}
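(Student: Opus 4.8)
The plan is to build the orthogonal matrix $Q$ by first producing a linear isometry between the two spans and then extending it to all of $\mathbb{R}^n$. Collect the vectors into the $n\times m$ matrices $V=(v_1,\ldots,v_m)$ and $W=(w_1,\ldots,w_m)$, so that the common Gram matrix is $G=V^\T V=W^\T W$. The starting observation is that for every coefficient vector $c=(c_1,\ldots,c_m)^\T\in\mathbb{R}^m$ one has
\[
\Big\|\sum_{i=1}^m c_i v_i\Big\|^2=c^\T G c=\Big\|\sum_{i=1}^m c_i w_i\Big\|^2,
\]
so $\sum_i c_i v_i=0$ if and only if $\sum_i c_i w_i=0$; that is, the families $\{v_i\}$ and $\{w_i\}$ satisfy exactly the same linear relations.

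Next I would define a map $T$ from $U:=\mathrm{span}\{v_1,\ldots,v_m\}$ to $U':=\mathrm{span}\{w_1,\ldots,w_m\}$ by $T\big(\sum_i c_i v_i\big)=\sum_i c_i w_i$. The displayed identity guarantees that $T$ is well defined: if two coefficient vectors yield the same element of $U$, their difference lies in the common kernel, hence they yield the same element of $U'$. The map $T$ is evidently linear and surjective onto $U'$, and the shared Gram matrix makes it inner-product preserving, since for $u=\sum_i c_i v_i$ and $u'=\sum_i d_i v_i$ we get $\langle Tu,Tu'\rangle=c^\T G d=\langle u,u'\rangle$. Thus $T\colon U\to U'$ is a linear isometry, and in particular $\dim U=\dim U'$.

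Finally I would extend $T$ to an orthogonal endomorphism of $\mathbb{R}^n$. Because $\dim U=\dim U'$, the orthogonal complements $U^\perp$ and $U'^\perp$ have equal dimension, so one can choose orthonormal bases of each and let $Q$ act as $T$ on $U$ and as any fixed isometry $U^\perp\to U'^\perp$ on the complement. Since $T$ preserves inner products and the complementary piece is mapped isometrically from $U^\perp$ onto $U'^\perp$, the glued map $Q$ preserves all inner products on $\mathbb{R}^n$ and is therefore orthogonal; by construction $Qv_i=Tv_i=w_i$ for every $i$. The argument is essentially routine linear algebra, and the only points needing a little care are the well-definedness of $T$, which is exactly where the hypothesis that the two Gram matrices coincide enters, and the verification that gluing the isometries on $U$ and $U^\perp$ produces a globally orthogonal map rather than merely two unrelated partial isometries.
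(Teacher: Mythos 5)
Your proof is correct. The paper itself gives no argument for this lemma --- it is quoted as a standard fact with a citation to Horn and Johnson \cite[Theorem 7.3.11]{horn2013} --- and your construction (define the linear isometry $T$ on $\mathrm{span}\{v_1,\ldots,v_m\}$ via the common linear relations forced by $c^\T G c = \|\sum_i c_i v_i\|^2 = \|\sum_i c_i w_i\|^2$, then glue on an arbitrary isometry between the orthogonal complements) is exactly the standard textbook proof. The two points you flag as needing care, well-definedness of $T$ and orthogonality of the glued map, are handled correctly: the cross terms in the inner product vanish because $T$ maps $U$ into $U'$ and the complementary isometry maps $U^\perp$ into $U'^\perp$.
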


\noindent\textbf{Proof of Theorem \ref{main1}}  Note that $A$ and $B$ are similar. Let $A=\sum_{k=1}^s\lambda_k P_kP_k^\T$ and $B=\sum_{k=1}^s\lambda_k R_kR_k^\T$ be the spectral decompositions. By Lemma \ref{samelen} and Proposition \ref{real}, we find that 
$\{P_k^\T\alpha_1,\ldots,P_k^\T\alpha_m\}$ and $\{R_k^\T \beta_1,\ldots,R_k^\T \beta_m\}$ have the same Gram matrix, for each $k\in \{1,\ldots,s\}$. It follows from Lemma \ref{gm} that, for each $k$, there exists an orthogonal matrix $Q_k$  such that $Q_k (P_k^\T \alpha_i)=R_k^\T \beta_i$ for $i\in\{1,\ldots,m\}$. Written in the matrix form, we have
\begin{equation}\label{mf}
	\begin{pmatrix}
		Q_1P_1^\T\\\vdots\\Q_sP_s^\T
	\end{pmatrix}(\alpha_1,\ldots,\alpha_m)=	\begin{pmatrix}
R_1^\T\\\vdots\\R_s^\T
	\end{pmatrix}(\beta_1,\ldots,\beta_m).
\end{equation}
Let
\begin{equation}\label{three}
Q=(P_1Q_1^\T,\ldots,P_sQ_s^\T)\begin{pmatrix}R_1^\T\\\vdots\\R_s^\T\end{pmatrix}.
\end{equation}
It is easy to see that both matrices on the right-hand side of Eq.~\eqref{three} are orthogonal matrices and hence $Q$ is also orthogonal. By Eq.~\eqref{mf}, we easily see that $Q^\T(\alpha_1,\ldots,\alpha_m)=(\beta_1,\ldots,\beta_m)$.
Finally, noting that
$$(P_1Q_1^\T,\ldots,P_sQ_s^\T)^\T A (P_1Q_1^\T,\ldots,P_sQ_s^\T)=\diag(\lambda_1 I_{r_1},\ldots,\lambda_sI_{r_s})$$
and 
$$(R_1,\ldots,R_s)^\T B (R_1,\ldots,R_s)=\diag(\lambda_1 I_{r_1},\ldots,\lambda_sI_{r_s}),$$
we obtain	\begin{eqnarray*}
	Q^\T A Q&=&(R_1,\ldots,R_s)(P_1Q_1^\T,\ldots,P_sQ_s^\T)^\T A(P_1Q_1^\T,\ldots, P_s Q_s^\T)\begin{pmatrix}
		R_1^\T\\ \vdots\\R_s^\T
	\end{pmatrix}\\
	&=&(R_1,\ldots,R_s)\diag(\lambda_1I_{r_1},\ldots, \lambda_{s}I_{r_s})\begin{pmatrix}
		R_1^\T\\ \vdots\\R_s^\T
	\end{pmatrix}\\
	&=&B.
\end{eqnarray*}
This completes the proof of Theorem \ref{main1}. 

\noindent\textbf{Proof of Theorem \ref{main2}}   Theorem \ref{main2} can be proved in exactly the same way as Theorem \ref{main1}. Indeed, by Lemma \ref{samelen} and Proposition \ref{rank2}, we find that $\{P_k^\T\alpha_1,\ldots,P_k^\T\alpha_m\}$ and $\{R_k^\T \beta_1,\ldots,R_k^\T \beta_m\}$ have the same Gram matrix for each $k\in \{1,\ldots,s\}$. Now the remaining proof of Theorem \ref{main1} clearly works for Theorem \ref{main2} and hence we are done.
	\section*{Declaration of competing interest}
There is no conflict of interest.
\section*{Acknowledgments}
This work is partially supported by the National Natural Science Foundation of China (Grant No. 12001006) and  Wuhu Science and Technology Project, China (Grant No.~2024kj015).


\begin{thebibliography}{99}
\bibitem{farrugia2019EJC}	A.~Farrugia, The overgraphs of 	generalized cospectral controllable graphs,  Electron. J. Combin. 26(1) (2019) \#P1.14.
\bibitem{guo2025AAM}S.~Guo, W.~Wang, W.~Wang, Primary decomposition theorem and generalized spectral characterization of graphs,  Adv. Appl. Math. 170 (2025) 102927.
	
\bibitem{horn2013}R.~A.~Horn, C.~R.~Johnson, Matrix Analysis (2nd ed.). Cambridge University Press, 2013.	
\bibitem{johnson1980JCTB} C.~R.~Johnson, M.~Newman, A note on cospectral graphs, J. Combin. Theory, Ser. B, 28 (1980) 96--103.	

\bibitem{qiu2023} L.~Qiu, Y.~Ji, L.~Mao, W.~Wang,
Generalized spectral characterizations of regular graphs based on graph-vectors, Linear Algebra  Appl. 663 (2023) 116--141.

\bibitem{wang2006EUJC}W.~Wang, C.-X.~Xu, A sufficient condition for a family of graphs being determined by their generalized spectra, European J. Combin. 27 (6) (2006) 826--840.
\bibitem{wang2017JCTB} W.~Wang, A simple arithmetic criterion for graphs being determined by their generalized spectra, J. Combin. Theory, Ser. B, 122 (2017) 438--451.
\bibitem{wang2025EJC} W.~Wang, W.~Wen, S.~Guo, A characterization of generalized cospectrality of rooted graphs with applications in graph reconstruction, Electron. J. Combin. 32(4) (2025)  \#P4.33
\bibitem{wang2023Eujc} W.~Wang, W.~Wang, F.~Zhu, An improved condition for a graph to be determined by its generalized spectrum, European J. Combin. 108 (2023) 103638.
\bibitem{wang2026AAM}W.~Wang, D.~Zhao, Graph isomorphism and multivariate
graph spectrum,  Adv. Appl. Math. 173 (2026) 102994.
\bibitem{xu2026} Y.~Xu, D.~Zhao, Generalized block diagonal Laplacian spectrum of graphs, arXiv:2511.23246.
\end{thebibliography}
\end{document}